\theoremstyle{plain}
\numberwithin{equation}{section}
\newtheorem{thm}{Theorem}[section]
\newtheorem{theorem}[thm]{Theorem}
\newtheorem{example}[thm]{Example}
\newtheorem{problem}[thm]{Problem}
\begin{document}

\title{A Special Theorem Related to the Fagnano's Problem}
\author{Jun Li}
\address{
School of Science\\
Jiangxi University of Science and Technology\\ Ganzhou\\
341000\\
China.
}
\email{junli323@163.com}

\begin{abstract}
A special theorem related to the Fagnano's problem is proved and an example of the theorem is shown in a golden rectangle.
\end{abstract}

\date{2016.6}
\maketitle

\section{A special theorem related to the Fagnano's problem}
The Fagnano's problem (see, e.g., \cite{cte1}) is an optimization problem that was first stated by Giovanni Fagnano in 1775:

\begin{problem}\label{pr}
For a given acute triangle $\triangle{ABC}$, determine the inscribed triangle of minimal perimeter.
\end{problem}

The answer to Problem \ref{pr} is that the orthic triangle of $\triangle{ABC}$ has the smallest perimeter. Here, we have a special Theorem \ref{fag1} related to the problem.
\begin{figure}[ht]
\centering
\includegraphics[width=0.5\textwidth]{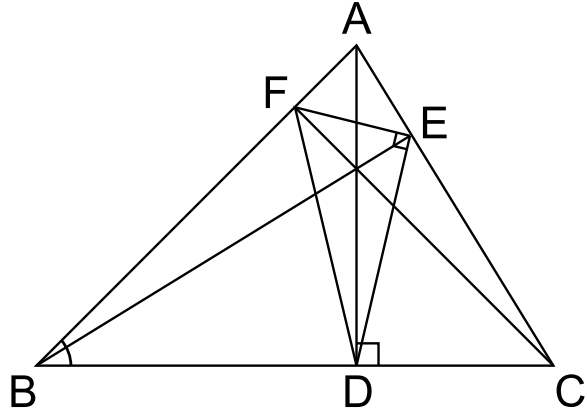}
\caption{A special theorem related to the Fagnano's problem}
\label{fig:fg6a}
\end{figure}
\begin{theorem}\label{fag1}
In Figure \ref{fig:fg6a}, Suppose $\triangle{ABC}$ is an acute-angled triangle, then, the of smallest perimeter triangle $\triangle{DEF}$ can be inscribed in $\triangle{ABC}$ is a right triangle if and only if $\triangle{ABC}$ has only one $\frac{\pi}{4}$ angle. In addition, the right angle $\angle{E}$ and the $\frac{\pi}{4}$ angle are opposite angles, which means here, $\angle{B}=\frac{\pi}{4}$.
\end{theorem}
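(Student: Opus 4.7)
The plan is to reduce the claim to a well-known formula for the angles of the orthic triangle, since by Fagnano's theorem the minimum-perimeter inscribed triangle $\triangle DEF$ is precisely the orthic triangle of $\triangle ABC$. Let $D$, $E$, $F$ denote the feet of the altitudes from $A$, $B$, $C$ respectively. I would first recall (or prove) the identity
\[
\angle D = \pi - 2A, \qquad \angle E = \pi - 2B, \qquad \angle F = \pi - 2C,
\]
which one establishes by observing that $BDHF$, $CDHE$, $AEHF$ (where $H$ is the orthocenter) are cyclic quadrilaterals, whence the angles at each foot can be computed via inscribed-angle chasing. This identity, together with $A+B+C=\pi$, is the analytic engine of the whole theorem.

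Given this, the forward direction of the biconditional is immediate: $\triangle DEF$ is right-angled exactly when one of $\pi - 2A$, $\pi - 2B$, $\pi - 2C$ equals $\pi/2$, i.e.\ one of $A$, $B$, $C$ equals $\pi/4$. For the ``only one'' assertion, I would argue by contradiction: if two angles of $\triangle ABC$ were $\pi/4$, the third would be $\pi/2$, violating acuteness. Hence under the acute hypothesis exactly one angle equals $\pi/4$, and conversely if exactly one angle (say $B$) equals $\pi/4$, then $\pi - 2B = \pi/2$ produces a right angle of the orthic triangle.

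For the matching of vertices stated in the theorem, I would note that $E$ lies on side $CA$, which is opposite to vertex $B$, and the angle of the orthic triangle at $E$ is $\pi - 2B$. So the condition $B = \pi/4$ places the right angle precisely at $E$, establishing the ``opposite angles'' clause. In particular, the labeling in Figure~\ref{fig:fg6a} with $\angle B = \pi/4$ is forced once we agree that $\angle E$ is the right angle.

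The main obstacle, if any, is the careful verification of the orthic-angle formula $\angle E = \pi - 2B$; the cyclic quadrilateral argument has several sub-cases depending on which pairs of feet one uses, and one must be sure the angle extracted at $E$ (rather than its supplement) is the interior angle of $\triangle DEF$. Once this is nailed down cleanly, both directions of the equivalence and the vertex correspondence follow by direct algebra from $A+B+C=\pi$ together with $0 < A,B,C < \pi/2$.
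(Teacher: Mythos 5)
Your proposal is correct, and it takes a genuinely different route from the paper. You reduce everything to the closed-form identity for the orthic angles, $\angle D = \pi - 2A$, $\angle E = \pi - 2B$, $\angle F = \pi - 2C$ (proved by the standard cyclic-quadrilateral chase through $BDHF$, $CDHE$, $AEHF$), after which the biconditional, the impossibility of two $\frac{\pi}{4}$ angles in an acute triangle, and the vertex correspondence ($E$ on the side opposite $B$, right angle at $E$ iff $B=\frac{\pi}{4}$) all drop out by algebra from $A+B+C=\pi$. The paper instead avoids the full formula: it uses only the fact that the orthocenter is the incenter of the orthic triangle (so the altitudes bisect the orthic angles), assumes $\angle E=\frac{\pi}{2}$ to get $\angle FEB=\angle BED=\frac{\pi}{4}$ and $\angle CFE+\angle ADE=\frac{\pi}{4}$, and then extracts $\angle B=\frac{\pi}{4}$ from the angle sum of the quadrilateral $BDEF$. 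Your approach buys a cleaner and more complete argument: the identity is an equivalence, so both implications of the ``if and only if'' come for free, whereas the paper's computation as written establishes only the direction from $\angle E=\frac{\pi}{2}$ to $\angle B=\frac{\pi}{4}$. The paper's approach buys economy of prerequisites (just the bisector property plus a quadrilateral angle sum), at the cost of having to rerun or reverse the argument for the converse. Your one flagged obstacle---making sure the cyclic-quadrilateral chase yields the interior angle at $E$ rather than its supplement---is real but routine for acute triangles, where all three feet lie strictly inside the respective sides.
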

\begin{proof}
According to the answer above, the $\triangle{DEF}$ is the orthic triangle of $\triangle{ABC}$, and it's also known that the incenter of the orthic triangle $\triangle{DEF}$ is the orthocenter of $\triangle{ABC}$ (see, e.g., \cite{cte2}). Now, suppose $\triangle{DEF}$ is a right triangle with $\angle{E}=\frac{\pi}{2}$, 
then in $\triangle{DEF}$, we have
$$\angle{D}+\angle{E}+\angle{F}=\pi$$
$$\angle{DFC}=\angle{CFE}, \angle{FDA}=\angle{ADE}, \angle{FEB}=\angle{BED}=\frac{\pi}{4}$$
then we get $$\angle{CFE}+\angle{ADE}=\frac{\pi}{4}$$
and in the quadrilateral $BDEF$, we have
$$\angle{B}+\angle{E}+\angle{BFE}+\angle{BDE}=2\pi$$
$$\angle{BFE}=\angle{BFC}+\angle{CFE}=\frac{\pi}{2}+\angle{CFE}$$
$$\angle{BDE}=\angle{BDA}+\angle{ADE}=\frac{\pi}{2}+\angle{ADE}$$
with $\angle{E}=\frac{\pi}{2}$, we conclude $\angle{B}=\frac{\pi}{4}$, and obviously, an acute-angled triangle can only have one $\frac{\pi}{4}$ angle, and also, it's easy to see that the right angle $\angle{E}$ and $\angle{B}$ are opposite angles.
\end{proof}
\begin{figure}[ht]
\centering
\includegraphics[width=0.5\textwidth]{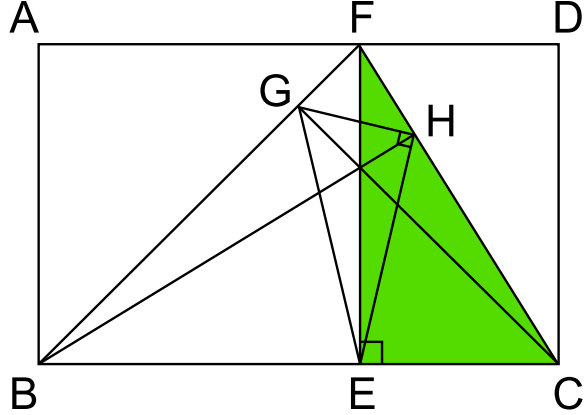}
\caption{An example of the special theorem in a golden rectangle}
\label{fig:fg6b}
\end{figure}
Next, we show an example of the special Theorem \ref{fag1} in a golden rectangle\cite{cte8}\cite[p. 274]{cte5}\cite[p. 115]{cte9}.
\begin{example}
In Figure \ref{fig:fg6b}, $ABCD$ is a golden rectangle with $AB=1$, $BC=\phi$, made up of a unit square $ABEF$ and a small golden rectangle $FECD$, then the of smallest perimeter triangle $\triangle{GHE}$ that can be inscribed in the acute-angled triangle $\triangle{BFC}$ is a right triangle having sides proportional to $(1, 2, \sqrt{5})$\cite{cte3}.
\end{example}
\begin{proof}
It's easy to see that $\triangle{BFC}$ is an acute-angled triangle with $\angle{B}=\frac{\pi}{4}$, and according to Theorem \ref{fag1}, the orthic triangle $\triangle{GHE}$ is a right triangle with $\angle{H}=\frac{\pi}{2}$. Since $\triangle{BGC} \sim \triangle{BEF}$, we have $\frac{BG}{BC}=\frac{BE}{BF}=\frac{1}{\sqrt{2}}$ and get $BG=\frac{\phi}{\sqrt{2}}$, then apply the law of cosines to $\triangle{GBE}$, we have $$GE^2=BG^2+BE^2-2{BG}\cdot{BE}\cos{\angle{B}}$$ and get $GE=\sqrt{\frac{1+{\phi}^2}{2{\phi}^2}}$. Using the similar method, we can get $HE=\sqrt{\frac{2}{1+{\phi}^2}}$, hence $\frac{GE}{HE}=\frac{\sqrt{5}}{2}$.
\end{proof}

%%%%

%%%%

\medskip

\noindent Mathematics Subject Classification (2010).  51M04, 11B39

\noindent Keywords.  Fagnano's problem, Golden rectangle

\end{document}